\numberwithin{equation}{section}
\theoremstyle{plain}
\newtheorem{Th}{Theorem}[section]
\newtheorem{Cor}[Th]{Corollary}
\newtheorem{Lemma}[Th]{Lemma}
\numberwithin{equation}{section} \theoremstyle{definition}
\newtheorem{Ex}{Example}
\newtheorem{Def}[Th]{Definition}
\newcounter{FNC}[page]
\def\fauxfootnote#1{{\addtocounter{FNC}{2}\Magenta{$^\fnsymbol{FNC}$}%
     \let\thefootnote\relax\footnotetext{\Magenta{$^\fnsymbol{FNC}$#1}}}}
\newcommand{\Z}{\mathbb Z}
\newcommand{\R}{\mathbb R}
\newcommand{\conv}{\operatorname{conv}}
\newcommand{\w}{\operatorname{w}}
\newcommand{\nls}{\operatorname{nls_\Delta}}
\newcommand{\ls}{\operatorname{ls_\Delta}}
\newcommand{\lss}{\operatorname{ls_\square}}
\title{Lattice Size of Plane Convex Bodies}
\author{Anthony Harrison}
\address{Department of Mathematics\\
        Kent State University\\
        Summit Street, Kent, OH 44242, USA}
\email{aharri60@kent.edu}
\thanks{Work of Soprunova and Tierney was partially supported by NSF Grant DMS-1156798}
\author{Jenya Soprunova}
\address{Department of Mathematics\\
        Kent State University\\
        Summit Street, Kent, OH 44242, USA}
\email{soprunova@math.kent.edu}
\urladdr{http://www.math.kent.edu/~soprunova/}
\author{Patrick Tierney}
\address{Department of Mathematics\\
University of Washington\\
Box 354350, Seattle, WA 98195,
USA}
\email{ptierney@uw.edu}
\subjclass[2010]{11H06, 52B20, 52C05, 52C07}
\begin{document}

\begin{abstract}
The lattice size  $\ls(P)$ of a lattice polygon $P$ with respect to the standard simplex $\Delta$ was introduced and studied by Castryck and Cools in the context of simplification of the defining equation of an algebraic curve. 
Earlier, Schicho provided an ``onion skins'' algorithm for mapping a lattice polygon $P$ into a small integer multiple of the standard simplex, based on passing
successively to the convex hull of the interior lattice points of $P$. Castryck and Cools showed that this algorithm computes the lattice size of $P$.
 
In this paper we show that for a plane convex body $P$  a reduced basis of $\Z^2$ computes the lattice size.
This provides a lattice reduction algorithm for computing the lattice size, which works for any convex body $P\subset\R^2$ and outperforms the ``onion skins'' algorithm in the case when $P$ is a lattice polygon.
\end{abstract}

\maketitle

\section*{Introduction}

The central object of this paper is the lattice size of $P$, where $P$ is a plane convex body. In the case when $P$ is a lattice polygon, this invariant 
was defined in \cite{CasCools} in the context of simplification of the defining equation of an algebraic curve.  This notion appeared implicitly earlier in the work of Arnold \cite{Arnold},  B\'ar\'any and Pach \cite{BarPach},  Brown and Kasprzyk \cite{BrownKasp}, and Lagarias and Ziegler \cite{ LagZieg}.

Recall that an {\it affine unimodular  transformation} $T\colon\R^d\to \R^d$ is a composition of the multiplication by a unimodular matrix $A\in {\rm GL}(d,\Z)$ and  a translation by an integer vector. We next reproduce the definition of the lattice size from~\cite{CasCools}, 
which now applies to a convex body $P\subset\R^d$.

\begin{Def} Let $X\subset\R^d$  be  a set with positive Jordan measure.
The {\it lattice size} $\operatorname{ls}_X(P)$  with respect to $X$  is the smallest  integer $l$ such that $T(P)$ is contained in the $l$-dilate  of $X$ for some affine unimodular transformation $T$. The corresponding matrix $A$ is then said to compute $\operatorname{ls}_X(P)$. 
\end{Def}

Note that when $X=[0,1]\times\R^{n-1}$,  the lattice size of $P$ with respect to $X$ is the lattice width $\w(P)$, an important invariant in lattice geometry and its applications.

In this paper we are studying $\ls(P)$, the lattice size  with respect to the standard simplex $\Delta\subset\R^d$. Let $f\in k[x^{\pm 1}_1,\dots, x^{\pm 1}_d]$ be a Laurent polynomial over a field $k$ and consider a hypersurface $H$ in $k^d$ defined by $f=0$.
The Newton polytope $P$ of $f$ is the convex hull of the exponent vectors that appear in $f$. Given a unimodular matrix $A=(a_{ij})$, a unimodular  change of variables $x_i=u_1^{a_{1i}}\dots u_d^{a_{di}}$ transforms $f$ into a Laurent polynomial
with the Newton polytope $A(P)$. Therefore $\ls(P)$ is the smallest possible total degree of the defining equation of $H$ under unimodular changes of variables.

Motivated by this observation, Schicho~\cite{Schicho}  provided an ``onion skins'' algorithm for mapping a lattice polygon $P$ into $l\Delta$ for a small integer $l$. In \cite{CasCools} Castryck and Cools proved that this algorithm computes $\ls(P)$. 
The algorithm is based on successively passing from an integer polygon to the convex hull of its interior lattice points. One can easily see that under this operation the lattice size drops by at least 3.
It was shown in  \cite{CasCools} that the lattice size $\ls(P)$ drops exactly by 3, unless $P$ belongs to a described list of exceptional cases.

Let $\square=[0,1]^d$ be the unit cube. In the case $d=2$ and a lattice polygon $P$, Castryck and Cools also provided in \cite{CasCools}  an ``onion skins'' algorithm for computing the lattice $\lss(P)$.
While the ``onion skins'' algorithm is very visual, it involves listing the interior lattice  points of $P$, which is time-consuming. It is also not feasible to generalize the ``onion skins'' algorithm to even dimension 3.   
 In~\cite{HarSopr} the lattice size $\lss(P)$ was recognized as a successive minimum of $K=\left(P+(-P)\right)^\circ$, the polar dual of the Minkowski sum of $P$ with $-P$, the reflection of $P$ in the origin.
 In dimension 2, the generalized Gauss Reduction algorithm of Kaib and Schnorr~\cite{KaibSch} computes the successive minima of an arbitrary origin-symmetric convex body $K$, and therefore it also computes
 the lattice size of the corresponding $P$, which can now be assumed to be an arbitrary convex body in $\R^2$. It is explained in~\cite{HarSopr} that in the case when $P$ is a lattice polygon, the algorithm  of Kaib and Schnorr outperforms the
  ``onion skins'' algorithm. The algorithm of Kaib and Schnorr was then extended in~\cite{HarSopr}  to dimension 3. Hence, in particular, the generalized basis reduction algorithm of~\cite{HarSopr} provides a fast way of computing $\lss(P)$ for 
 an arbitrary convex body $P\subset\R^3$.

Given a convex body $P\subset\R^2$, it was shown in~\cite{HarSopr} that if a basis $(h^1,h^2)$  of $\Z^2$ is reduced with respect to $K=\left(P+(-P)\right)^\circ$ (see Definition~\ref{D:def_red2}) then matrix $A$ with rows $h^1$  and $h^2$ 
computes $\lss(P)$. The main result of the current paper is formulated in Theorem~\ref{T:main} and Corollary~\ref{C:cormain}, where we prove a similar statement for $\ls(P)$. We define $\nls(P)$  to be the smallest $l\geq 0$ such that $P$ is contained in $l\Delta$  after a transformation which is  a composition of the multiplication by a unimodular  matrix of the form $\left[\begin{matrix}\pm 1&0\\0&\pm 1\end{matrix}\right]$ and  a lattice translation. We then show that if a lattice basis $(h^1,h^2)$ is reduced (with respect to $K$) then $\ls(P)=\nls(AP)$, where $A$ is the matrix whose rows are $h^1$ and $h^2$. Therefore, the generalized Gauss reduction algorithm of Kaib and Schnorr can be used to find the lattice size $\ls(P)$ of an arbitrary convex body $P\subset\R^2$. Not only this algorithm works for a much wider class of  $P\subset\R^2$ than the ``onion skins'' algorithm, it is also faster in the case when $P$ is a lattice polygon, as explained in~\cite{HarSopr}.    

It was further shown in~\cite{HarSopr} that if a lattice basis  $(h^1,h^2,h^3)$ of $\Z^3$ is reduced  then matrix $A$ with rows $h^1,h^2$, and $h^3$ computes  $\lss(P)$. In Example~\ref{E:counter_example} we demonstrate that this is not the case for $\ls(P)$ and therefore one cannot use basis reduction to compute $\ls(P)$ in  dimension 3.

%
\section{Definitions}

Recall that ${\rm GL}(d,\Z)$ is the set of square matrices of size $d$ that are invertible over $\Z$, that is, for $A\in {\rm GL}(d,\Z)$ we have $\det A=\pm 1$.  A map $T\colon \R^d\to \R^d$ of the form $T(x)=Ax+v$, where $x\in\R^d$, $A\in {\rm GL}(d,\Z)$, and  $v\in\Z^d$ is called an {\it affine unimodular map}. Unimodular maps preserve the integer lattice $\Z^d\subset\R^d$. 
For a lattice polytope  $P\subset \R^d$ we will simply write $AP$ for the image $T(P)$ of $P$ under the  linear transformation  $T\colon \R^d\to\R^d$ defined by $T(x)=Ax$.
A vector $v\in\Z^d$ is {\it primitive} if its components are relatively prime. 


\begin{Def}\label{D:width} Let $P\subset\R^d$ be a convex body.
 The lattice width of  $P$ in the direction of $h\in\R^d$ is defined by
$$\w_h(P)=\max\limits_{x\in P} h\cdot x- \min_{x\in P} h\cdot x,
$$
where $ h\cdot x$ denotes the standard dot-product. The {\it lattice width} $\w(P)$  of $P$ is the minimum of $\w_h(P)$ over all non-zero primitive vectors $h\in \Z^d$. 
\end{Def}

Let $K:=(P+(-P))^{\circ}$ be the polar dual of the Minkowski sum of $P$ with $-P$, the reflection of $P$ in the origin.
Then $K$ is origin-symmetric and convex and it defines a norm on $\R^d$ by
$$\Vert h\Vert _K=\inf\{\lambda>0\mid h/\lambda \in K\}.$$
For details see, for example, \cite{Barvinok}. We then have
$$\Vert h\Vert_K =\inf\{\lambda>0\mid h\cdot x\leq\lambda {\rm\ for\ all\ }  x\in K^\circ\}=\max\limits_{x\in K^{\circ}} h\cdot x =\frac{1}{2}\w_h(K^{\circ})=\w_h(P).
$$
This, in particular, implies that the lattice  width of $P$ defines a convex function on $\R^d$. In what follows we will often write $\Vert h\Vert_K$ or simply $\Vert h\Vert$ for the lattice width $\w_h(P)$.

Let $0$ denote the origin in $\R^d$ and let $(e^1,\dots, e^d)$ be the standard basis of $\R^d$. Next, let 
$\Delta=\conv\{0,e^1,\dots, e^d\}\subset\R^d$ be  the standard simplex.  

\begin{Def}
The {\it lattice size} $\ls(P)$ of a convex body $P\subset\R^d$ with respect to the standard simplex $\Delta$ is the smallest $l\geq 0$ such that $P$ is contained in the $l$-dilate  $l\Delta$ of the standard simplex $\Delta$ after an affine unimodular transformation. A unimodular transformation $T$ and the corresponding matrix $A$  which minimize $l$ are said to compute $\ls(P)$.
\end{Def}

In~\cite{HarSopr} the lattice size $\lss(P)$  of a convex body $P\subset\R^d$ was recognized as a successive minimum of the corresponding $K$, which led to a fast lattice reduction algorithm for computing the lattice size with respect to the cube in dimensions 2 and 3.  In this paper we will show that given a reduced basis, one can easily recover $\ls(P)$.
Let $P\subset\R^2$ be a convex body  and let $K:=(P+(-P))^{\circ}$.

\begin{Def}\label{D:def_red2} We say that a basis $(h^1, h^2)$ of the integer lattice $\Z^2\subset\R^2$ is {\it reduced} (with respect to $K$) if 
$$\w_{h^i}(P)\leq \w_{h^1\pm h^2}(P) \ {\rm\ for\ }i=1,2.
$$
If we use the notation $\Vert h\Vert _K=\w_h(P)$, this condition rewrites as  
$$\Vert h^i\Vert_K \leq \Vert h^1\pm h^2\Vert_K\ {\rm\ for\ }i=1,2.$$
Usually one also requires that the basis vectors are ordered so that  $\Vert h^1\Vert_K\leq \Vert h^2\Vert_K$, but for our purposes it is convenient to omit this condition.
\end{Def}

%

The algorithm for finding a reduced basis of $\R^2$ with respect to an arbitrary origin-symmetric convex body $K$  is explained and analyzed in~\cite{KaibSch}. A modified version of this algorithm is discussed in~\cite{HarSopr}. 


\section{Reduced Basis Computes the Lattice Size}
Let $P\subset\R^2$ be a convex body. 

\begin{Def}We define the naive lattice size  $\nls(P)$ to be the smallest $l$ such that $T(P)\subset l\Delta$, where $T\colon\R^2\to\R^2$ is a combination of a matrix multiplication by a 
diagonal matrix $A$ with entries $\pm 1$ on the main diagonal, and a lattice translation. Further, let
\begin{eqnarray*}
l_1(P)&:=&\max\limits_{(x,y)\in P}(x+y)-\min\limits_{(x,y)\in P}x-\min\limits_{(x,y)\in P}y,\\
l_2(P)&:=&\max\limits_{(x,y)\in P} x+\max\limits_{(x,y)\in P}y-\min\limits_{(x,y)\in P}(x+y),\\
l_3(P)&:=&\max\limits_{(x,y)\in P} y-\min\limits_{(x,y)\in P}x +\max\limits_{(x,y)\in P} (x-y),\\
l_4(P)&:=&\max\limits_{(x,y)\in P} x-\min\limits_{(x,y)\in P}y+\max\limits_{(x,y)\in P}(y-x).
\end{eqnarray*}
Then $\nls(P)$ is the smallest of these four values.
\end{Def}

\begin{Def} We say that basis $(h^1,h^2)$ of $\Z^2$ computes $\ls(P)$ if for matrix $A$ with rows $h^1, h^2$ we have $\ls(P)=\nls(AP)$.
\end{Def}

\begin{Ex}
Let $P=\conv\{(0,0), (4,1), (5,2)\}$. Then, as demonstrated in the diagram, $l_1(P)=7$, $l_2(P)=7$, $l_3(P)=5$, and $l_4(P)=5$, and hence $\nls(P)=5$.
\begin{figure}[h]
\includegraphics[scale=.5]{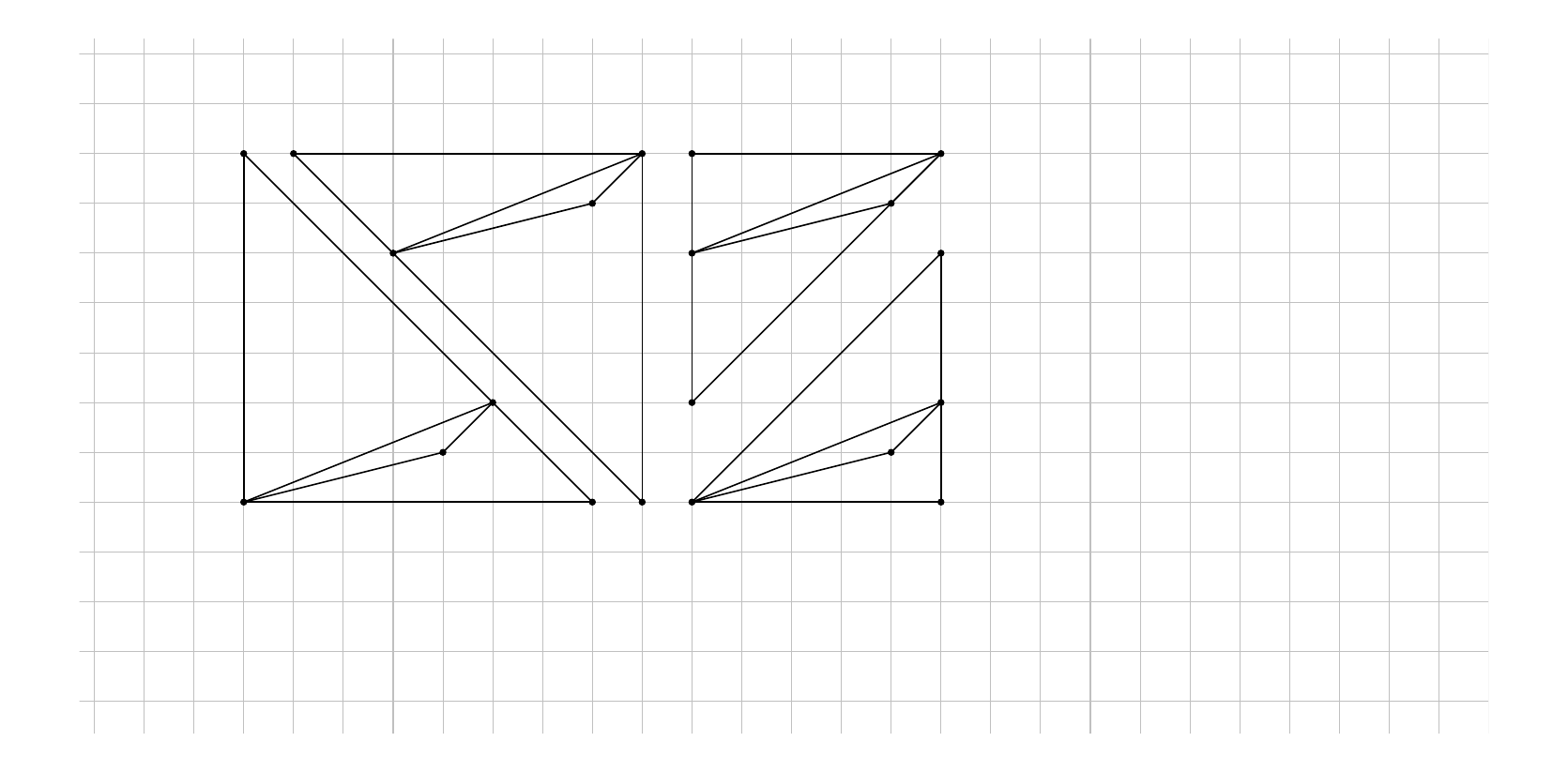}
\end{figure}
If we apply $A=\left[\begin{matrix} 1&-4\\0&1\end{matrix}\right]$ to $P$ we get a triangle with the vertices $(0,0), (0,1), (-3,2)$.
Then $l_1(AP)=4$, $l_2(AP)=3$, $l_3(AP)=5$, and $l_4(AP)=4$, so $\nls(AP)=3$.
Since $P$ has a lattice point inside, while $2\Delta$ does not, it is impossible to unimodularly map $P$ inside $2\Delta$.  We conclude that $\ls(P)=3$, and $A=\left[\begin{matrix}1&-4\\0&1\end{matrix}\right]$ 
and the lattice basis $\left((1,-4), (0,1)\right)$ compute the lattice size of $P$. 
\end{Ex}

We first record a few straight-forward observations.

\begin{Lemma}\label{L:applyA}
For  $h\in\R^2$ and $A\in{\rm GL}(2,\Z)$ we have  $\w_{h}(AP)=\w_{A^{T}h}(P)$. Therefore, if the rows of  $A$ are $h^1$ and $h^2$, then $\w_{e^1}(AP)=\w_{h^1}(P)$ and $\w_{e^2}(AP)=\w_{h^2}(P)$.
\end{Lemma}
\begin{proof}
For any $h\in\R^2$ we have
$$\w_{h}(AP)=\max\limits_{x\in P} h\cdot (Ax)- \min_{x\in P} h\cdot (Ax)=\max\limits_{x\in P} (A^{T}h)\cdot x- \min_{x\in P} (A^{T}h)\cdot x=\w_{A^{T}h}(P).
$$
\end{proof}

\begin{Lemma}\label{L:symmetries} 
\begin{itemize}
\item[(a)] If $P$ is reflected in the line $x=y$, then $l_1$ and $l_2$ do not change, while $l_3$ and $l_4$ switch with each other.
\item[(b)] If $P$ is reflected in the $x$-axis, then $l_1$ switches  with $l_3$, while $l_2$ switches  with $l_4$.
 \item[(c)] If $P$ is reflected in the $y$-axis, then $l_1$ switches  with $l_4$, while $l_2$ switches  with $l_3$.
\item[(d)] The naive lattice size $\nls(P)$ is preserved under the reflection in the line $y=x$ as well as under the reflections in the $x$- and $y$-axis.
\end{itemize}
\end{Lemma}

\begin{Lemma}\label{L:ls_properties} $\ $
\begin{itemize}
\item[(a)] 
\begin{eqnarray*}
l_1\left(\left[\begin{matrix}a&b\\c&d\end{matrix}\right]P\right)&=&\max\limits_{(x,y)\in P} ((a+c)x+(b+d) y)\\ & -& \min\limits_{(x,y)\in P} (ax+by) -  \min\limits_{(x,y)\in P} (cx+dy);\\
\end{eqnarray*}
\item[(b)]  $l_1\left(\left[\begin{matrix}a&b\\c&d\end{matrix}\right]P\right)=l_1\left(\left[\begin{matrix}c&d\\a&b\end{matrix}\right]P\right)$;
\item[(c)] \begin{eqnarray*} l_1\left(\left[\begin{matrix}a&b\\c&d\end{matrix}\right]P\right)&=&l_1\left(\left[\begin{matrix}-(a+c)&-(b+d)\\c&d\end{matrix}\right]P\right)\\
&=&l_1\left(\left[\begin{matrix} a&b\\-(a+c)&-(b+d)\end{matrix}\right]P\right).
\end{eqnarray*}
\end{itemize}
\begin{proof} 
First equality of part (c) is equivalent to claiming that $l_1(AP)=l_1(SAP)$, where $A=\left[\begin{matrix}a&b\\c&d\end{matrix}\right]$ and $S=\left[\begin{matrix}-1&-1\\0&1\end{matrix}\right]$, so replacing $AP$ with $P$, it is enough
to show that $l_1(SP)=l_1(P)$. We have
\begin{eqnarray*}
l_1(SP)&=&\max\limits_{(x,y)\in P}(-x)-\min\limits_{(x,y)\in P}(-x-y)-\min\limits_{(x,y)\in P} y\\
&=&\max\limits_{(x,y)\in P}(x+y)-\min\limits_{(x,y)\in P}x-\min\limits_{(x,y)\in P} y=l_1(P).
\end{eqnarray*}
\end{proof}
\end{Lemma}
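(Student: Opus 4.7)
The plan is to prove part (a) by direct substitution into the definition of $l_1$, and then to deduce parts (b) and (c) as short algebraic consequences of the explicit formula from (a), rather than running separate from-scratch calculations in each case.

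For (a), as $(x,y)$ ranges over $P$ the image $A(x,y)=(ax+by,\,cx+dy)$ ranges over $AP$, so for $(X,Y)\in AP$ we have $X+Y=(a+c)x+(b+d)y$. Feeding this into the definition $l_1(Q)=\max_{(X,Y)\in Q}(X+Y)-\min_{(X,Y)\in Q}X-\min_{(X,Y)\in Q}Y$ and re-indexing the extrema over $P$ yields the formula in (a) immediately.

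For (b), the right-hand side of (a) is manifestly symmetric under the swap $(a,b)\leftrightarrow(c,d)$: the ``$\max$'' term depends only on the column sums $a+c$ and $b+d$, which are invariant under a row swap, and the two ``$\min$'' terms are merely interchanged. Equivalently, swapping the rows of $A$ reflects $AP$ across the line $y=x$, so \rl{symmetries}(a) applies. For the first equality of (c), I will use the identity $\max(-f)=-\min f$ and apply the formula from (a) to $\left[\begin{matrix}-(a+c)&-(b+d)\\c&d\end{matrix}\right]$, whose column sums are $(-a,-b)$: the ``$\max$'' term collapses to $-\min(ax+by)$, the first ``$\min$'' term becomes $-\max((a+c)x+(b+d)y)$, and the second ``$\min$'' term is unchanged; reassembling these three contributions recovers the formula for $l_1(AP)$ from (a). The second equality of (c) then follows by sandwiching the first equality between two applications of (b): swap rows, apply the first equality of (c) to alter the new top row, then swap rows back.

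The main obstacle, such as it is, is purely bookkeeping: one must keep track of signs when making the substitutions in (c), and be careful that in (b) and in the second equality of (c) the algebraic symmetries of the formula from (a) are applied to the correct rows. With (a) in hand, each of the remaining deductions is only a couple of lines of algebra, and no appeal to the definition beyond (a) is needed.
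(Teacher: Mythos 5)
Your proposal is correct and follows essentially the same route as the paper: both rest on the explicit formula for $l_1(AP)$ in terms of extrema over $P$, and your verification of the first equality of (c) is the same computation the paper performs (the paper packages it as $l_1(SP)=l_1(P)$ for $S=\left[\begin{matrix}-1&-1\\0&1\end{matrix}\right]$, you apply the formula from (a) directly to the modified matrix). You simply spell out (a), (b), and the second equality of (c), which the paper leaves as immediate.
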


\begin{Lemma}\label{L:lwidth}
Suppose that for $A=\left[\begin{matrix} a&b\\c&d\end{matrix}\right]$ we  have $l_1:=l_1(AP)< l$.
Then  the lattice width of $P$ in the three directions, $(a,b)$, $(c,d)$, and $(a+c,b+d)$, is less than $l$. 
\end{Lemma}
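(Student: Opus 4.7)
The plan is to rewrite $l_1(AP)$ as a single expression depending only on $P$ (not on $AP$) via part (a) of \rl{ls_properties}, and then to compare that expression termwise to the three widths in question. Concretely, set
\[
M_1=\maxP(ax+by),\ m_1=\minP(ax+by),\ M_2=\maxP(cx+dy),\ m_2=\minP(cx+dy),
\]
and $M_3=\maxP((a+c)x+(b+d)y)$, $m_3=\minP((a+c)x+(b+d)y)$. Then $\w_{(a,b)}(P)=M_1-m_1$, $\w_{(c,d)}(P)=M_2-m_2$, and $\w_{(a+c,b+d)}(P)=M_3-m_3$, while \rl{ls_properties}(a) gives $l_1(AP)=M_3-m_1-m_2$.

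For the third direction the inequality is immediate: the elementary fact $\min(f+g)\ge\min f+\min g$ applied to the linear functions $f=ax+by$ and $g=cx+dy$ on $P$ yields $m_3\ge m_1+m_2$, so
\[
\w_{(a+c,b+d)}(P)=M_3-m_3\le M_3-m_1-m_2=l_1(AP)<l.
\]

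For the remaining two widths the idea is to evaluate the linear form $(a+c)x+(b+d)y$ at a point of $P$ where one of the summands is already extremal. Pick $p\in P$ achieving $ax+by=M_1$; then the value of $(a+c)x+(b+d)y$ at $p$ is $M_1+(cp_x+dp_y)\ge M_1+m_2$, so $M_3\ge M_1+m_2$. Substituting gives
\[
l_1(AP)=M_3-m_1-m_2\ge (M_1+m_2)-m_1-m_2=M_1-m_1=\w_{(a,b)}(P),
\]
so $\w_{(a,b)}(P)<l$. The bound $\w_{(c,d)}(P)<l$ follows by the same argument with the roles of the two rows of $A$ swapped (or, equivalently, via \rl{ls_properties}(b), which tells us $l_1(AP)$ is unchanged under this swap).

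I do not expect any real obstacle here: the statement is essentially an unpacking of the formula in \rl{ls_properties}(a) together with the two trivial inequalities $\min(f+g)\ge\min f+\min g$ and $\max(f+g)\ge\max f+\min g$ for linear forms on $P$. The only thing worth being careful about is to present the three directions uniformly, since the formula for $l_1(AP)$ is asymmetric between the "max" direction $(a+c,b+d)$ and the two "min" directions $(a,b)$ and $(c,d)$.
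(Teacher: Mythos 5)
Your proof is correct, but it is packaged differently from the paper's. The paper first proves the identity $\w_v(AP)=\w_{A^Tv}(P)$, observes that $A^Te_1=(a,b)$, $A^Te_2=(c,d)$, $A^T(e_1+e_2)=(a+c,b+d)$, and then bounds all three widths uniformly by the single geometric fact that $AP$ sits inside $l_1\Sigma$ (up to translation) and $\w_{e_i}(l_1\Sigma)=\w_{e_1+e_2}(l_1\Sigma)=l_1$. You instead stay entirely on the $P$ side, expand $l_1(AP)=M_3-m_1-m_2$ via \rl{ls_properties}(a), and deduce the three bounds from the elementary inequalities $m_3\ge m_1+m_2$ and $M_3\ge M_1+m_2$ (and its mirror), necessarily treating the ``sum'' direction and the two ``row'' directions asymmetrically. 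Both arguments are short and rest on the same underlying facts; yours is more self-contained and purely algebraic, while the paper's width-transport formulation is the one that carries over verbatim to the $n$-dimensional setting in Section 4, where $\w_{e_i}(AP)\le\w_{e_i}((l-1)\Sigma)$ is reused for each row of an $n\times n$ matrix.
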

\begin{proof} 
We have $AP\subset l_1\Delta$, which implies that $\w_{e^1}(AP)\leq \w_{e^1} (l_1\Delta)=l_1$. 
Hence using Lemma~\ref{L:applyA} we get
$$\w_{(a,b)}(P)=\w_{e^1}(AP)\leq l_1<l.$$
The same argument applied to directions  $e^2$ and $e^1+e^2$ gives the remaining two conlcusions.
\end{proof}

\begin{Th}\label{T:main} Let $P\subset\R^2$ be a convex body and let $K=\left(P+(-P)\right)^{\circ}$. If the standard basis  $(e^1,e^2)$ is reduced with respect to $K$ then $\ls(P)=\nls(P)$.
\end{Th}

\begin{proof}
We will use the notation $\Vert h\Vert=\Vert h\Vert_K$ for $\w_h(P)$.  First, shift $P$ so that $\min\limits_{(x,y)\in P} x= \min\limits_{(x,y)\in P} y= 0$ and denote $$l:=l_1(P)=\max\limits_{(x,y)\in P}(x+y).$$
Next, denote  $k=\max\limits_{(x,y)\in P} x$; $m=\max \limits_{(x,y)\in P} y$; and 
$s=\min\limits_{(x,y)\in P}(x+y)$. Note that since there is a point of the form $(a,0)$ in $P$ we get $s\leq a\leq k$. Similarly, $s\leq m$.
We have $\Vert e^1\Vert= k$, $\Vert e^2\Vert= k$, and $\Vert e^1+e^2\Vert=l-s$. Since the standard basis is reduced, we get 
$l-s=\Vert e^1+e^2\Vert\geq \Vert e^1\Vert= k$ and  $l-s=\Vert e^1+e^2\Vert\geq \Vert e^2\Vert= m.$
This implies that $l\geq 2s$ since $k\geq s$ and $m\geq s$.
Hence 
\begin{equation}\label{e:2(1,1)}
2\Vert e^1+e^2\Vert=2l-2s=l+(l-2s)\geq l\geq \nls(P).
\end{equation}
Note that we also have $k+m\geq l$ since $P\subset [0,k]\times [0,m]\subset (k+m)\Delta$. We record the obtained relations for future use
\begin{equation}\label{e:all}
l\geq k+s,\ \ l\geq m+s, \ \ l\geq 2s,\ \ k+m\geq l.
\end{equation}

Pick a primitive direction $(a,b)\in\Z^2$.  Our goal is to show that $\Vert ae^1+be^2\Vert\geq \nls(P)$ for many such $(a,b)$. Note that if we have a reduced basis, we can flip its vectors and their signs and the obtained basis will also be reduced. 
Also, as we observed in Lemma~\ref{L:symmetries}, the naive lattice size $\nls(P)$ is invariant with respect to the reflections in the coordinate axes and the origin. Hence  without loss of generality we can assume that $a\geq b\geq 0$.
Using the triangle inequality we get 
$$\Vert ae^1+be^2\Vert+(a-b)\Vert e^2\Vert=\Vert ae^1+be^2\Vert+\Vert (a-b)e^2\Vert\geq a\Vert e^1+e^2\Vert.
$$
On the other hand, since $(e^1,e^2)$ is reduced we get
$$\Vert ae^1+be^2\Vert+(a-b)\Vert e^2\Vert\leq \Vert ae^1+be^2\Vert+(a-b)\Vert e^1+e^2\Vert.
$$
Hence for $b\geq 2$, combining the above two lines and (\ref{e:2(1,1)}), we get
$$\Vert ae^1+be^2\Vert \geq b \Vert e^1+e^2\Vert\geq 2\Vert e^1+e^2\Vert\geq \nls(P).
$$

We have checked that $\Vert ae^1+be^2\Vert(P)\geq\nls(P)$ for all primitive directions  $(a,b)$ with $\min\{|a|, |b|\}\geq 2$. 
Without loss of generality we can assume that $k\leq m$, that is, $\Vert e^1\Vert\leq \Vert e^2\Vert$. If  $b\geq a=1$
then
$$bm=b\Vert e^2\Vert\leq\Vert e^1+be^2\Vert +\Vert e^1\Vert=\Vert e^1+be^2\Vert+k,
$$
so for $b\geq 3$ we get
$$\Vert e^1+be^2\Vert\geq bm-k\geq (b-1)m\geq 2m\geq  k+m\geq l\geq  \nls(P).
$$

We have checked that under the assumption $k\leq m$ we have $\Vert ae^1+be^2\Vert\geq\nls(P)$ for all primitive $(a,b)$ except, possibly, for the ones in the set 
$$R=\{(a,\pm 1), (\pm 1,0), (\pm 1,\pm 2)\mid a\in\Z\}.
$$
It follows from  Lemma~\ref{L:lwidth} that it  remains to show that if the rows and the row sum of a unimodular matrix $A$ are in $R$ then $l_1(AP)\geq \nls(P)$.

Let first $A=\left[\begin{matrix}a&1\\1&0\end{matrix}\right]$ with $a\geq 0$. Then
$$l_1(AP)=\max\limits_{(x,y)\in P} ((a+1)x+y)-\min\limits_{(x,y)\in P} (ax+y)-\min \limits_{(x,y)\in P} (x).
$$

\vspace{.2cm}
 \begin{center} 
 \includegraphics[scale=.7]{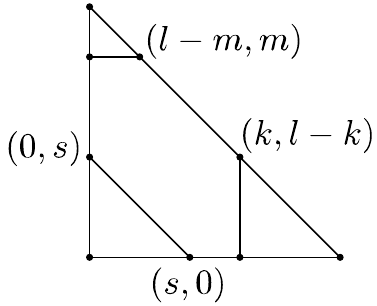}
\end{center}
Since $P\subset l\Delta$ has a point on the segment that joins points $(k,l-k)$ and $(l-m,m)$ we have
$$
\max\limits_{(x,y)\in P}((a+1)x+y)\geq \min\{(a+1)k+(l-k), (a+1)(l-m)+m\}=(a+1)(l-m)+m,
$$
where we used $k+m\geq l$ from~(\ref{e:all}). Using this together with $\min\limits_{(x,y)\in P} (ax+y)\leq as$ we get
$l_1(AP)\geq (a+1)(l-m)+m-as$. Then $(a+1)(l-m)+m-as\geq l$ is equivalent to $a(l-m)\geq as$, which holds true since $a\geq 0$ and $l\geq m+s$.


For $A=\left[\begin{matrix}1&1\\0&1\end{matrix}\right]$ we get
\begin{eqnarray*}
l_1(AP)&=&\max\limits_{(x,y)\in P}(x+2y)-\min \limits_{(x,y)\in P} (x+y)-\min\limits_{(x,y)\in P} y\\
&=&\max\limits_{(x,y)\in P}(x+2y)-s\geq 2m-s\geq k+m-s=l_2(P)\geq \nls(P).
\end{eqnarray*}

Suppose that we showed that for a matrix $A$ we have $l_1(AP)\geq \nls(P)$ whenever  $(e^1,e^2)$ is reduced with respect to $P$.
Let $Q$ be the reflection of $P$ in the $x$-axis. Then since $(e^1,e^2)$ is also reduced with respect to $Q$, we have also shown that $l_1(AQ)\geq \nls(Q)$. This implies that 
 $$l_1\left(\left[\begin{matrix}-a&b\\-c&d\end{matrix}\right] P\right)=l_1\left(\left[\begin{matrix}a&b\\c&d\end{matrix}\right] \left[\begin{matrix}-1&0\\0&1\end{matrix}\right] P\right)=l_1\left(A Q\right)\geq \nls(Q)=\nls(P).
 $$
Hence if we proved that $l_1(AP)\geq \nls(P)$ for matrix $A$, then  the same conclusion holds true for a matrix that can be obtained from $A$ by flipping one or both column signs.

Our next goal is to show that unimodular matrices whose rows and the row sum are in the set $R$ reduce to either $A=\left[\begin{matrix}1&1\\0&1\end{matrix}\right]$ or $\left[\begin{matrix}a&1\\1&0\end{matrix}\right]$ with $a\geq 0$ by flipping column signs and using reductions described  in part(c) of Lemma~\ref{L:ls_properties}.

Since  $(\pm 1,0)$ and  $(\pm 1,\pm 2)$ cannot be the two rows of a unimodular matrix, we can assume that first row of $A$ is of the form $(a,\pm 1)$. If now the second row is of the form $(\pm1,\pm 2)$, that is, $A=\left[\begin{matrix}a&1\\ \pm 1&-2\end{matrix}\right]$ or  
$A=\left[\begin{matrix}a& -1\\ \pm 1&2\end{matrix}\right]$ (the row sum also has to be in $R$), using Lemma~\ref{L:ls_properties},  we can 
change the second row to one of the form  $(b,\pm 1)$ for some $b\in\Z$ and get a matrix of the form $\left[\begin{matrix}a& \pm1\\ b&\pm 1\end{matrix}\right]$ for some $a,b\in\Z$.

If the second row of $A$ is of the form $(\pm 1,0)$ we get $A=\left[\begin{matrix}a& \pm1\\ \pm 1&0\end{matrix}\right]$ for some $a\in\Z$. Switching column signs we obtain a matrix of the form $\left[\begin{matrix}a& 1\\ 1&0\end{matrix}\right]$, where $a\in\Z$.

We now identify unimodular matrices both of whose rows are of the form $(a,\pm 1)$.  Let first $A=\left[\begin{matrix}a&1\\b&-1\end{matrix}\right]$ for some $a,b\in\Z$.
Then the sum of rows $(a+b,0)$ is  in $R$ we get $a+b=\pm 1$. Hence $A=\left[\begin{matrix}a&1\\\pm 1-a&-1\end{matrix}\right]$ for some $a\in\Z$ and using Lemma~\ref{L:ls_properties} we can replace $A$ with
 $\left[\begin{matrix}a&1\\\pm 1&0\end{matrix}\right]$, and switching the sign of the first column, if necessary, we get a matrix of the form $\left[\begin{matrix}a&1\\1&0\end{matrix}\right]$ with $a\in\Z$.

Next let $A=\left[\begin{matrix}a&1\\b&1\end{matrix}\right]$ for  $a,b\in\Z$. Then $a+b=\pm 1$ and $a-b=\pm 1$, where for  the second condition we used $\det A=\pm 1$.
Switching rows if necessary we get $A=\left[\begin{matrix}1&1\\0&1\end{matrix}\right]$ or $\left[\begin{matrix}-1&1\\0&1\end{matrix}\right]$. 
If $A$ is of the form $\left[\begin{matrix}a&-1\\b&-1\end{matrix}\right]$ we get $A=\left[\begin{matrix}1&-1\\0&-1\end{matrix}\right]$ or $\left[\begin{matrix}-1&-1\\0&-1\end{matrix}\right]$. All of these matrices reduce to $\left[\begin{matrix}1&1\\0&1\end{matrix}\right]$ after we flip column signs.

Next note that using Lemma~\ref{L:ls_properties} we can pass from $A=\left[\begin{matrix}-a& 1\\ 1&0\end{matrix}\right]$  to $\left[\begin{matrix}a-1& -1\\  1&0\end{matrix}\right]$ and switching the sign of the second column we pass to the matrix $\left[\begin{matrix}a-1& 1\\  1&0\end{matrix}\right]$. This shows that  matrices of the form $\left[\begin{matrix} a& 1\\ 1&0\end{matrix}\right]$ with $a\in\Z$ reduce to matrices of the same form with $a\geq 0$, for which we have checked that $l_1(AP)\geq \nls(P)$.

We have checked that for all unimodular $A$ we have $l_1(AP)\geq \nls(P)$, which implies $\ls(P)=\nls(P)$.
\end{proof}

\vspace{0cm}
\begin{Cor}~\label{C:cormain} Let $P\subset\R^2$ be a convex body. If the  basis  $(h^1,h^2)$ of $\Z^2$ is reduced with respect to $P$ then $\ls(P)=\nls(AP)$, where $A$ is a matrix whose rows are $h^1$ and $h^2$. 
\end{Cor}
\begin{proof}   
By Lemma~\ref{L:applyA} we have 
$$\w_{e^1}(AP)=\w_{h^1}(P),\  \w_{e^2}(AP)=\w_{h^2}(P),\  \ {\rm and}\  \ \w_{e^1\pm e^2}(AP)=\w_{h^1\pm h^2}(P),
$$ 
and it follows that the standard basis $(e^1,e^2)$ is reduced with respect to $AK$. 
By Theorem~\ref{T:main} this implies that $\ls(P)=\ls(AP)=\nls(AP)$.
\end{proof}

The algorithm for finding a reduced basis is explained and analyzed in~\cite{HarSopr} and~\cite{KaibSch}. In each iteration of the algorithm, starting with basis $(h^1,h^2)$ with $\Vert h^1\Vert\leq \Vert h^2\Vert$, one finds $m$ that minimizes $\Vert mh^1+h^2\Vert$. If $\Vert mh^1+h^2\Vert\geq \Vert h^1\Vert$ then the basis $(h^1, mh^1+h^2)$ is reduced. Otherwise, the next iteration is applied to the basis $(mh^1+h^2,h^1)$.
In the example below we find a reduced basis and compute the lattice size.

\begin{Ex}
Let $P=\conv\{(0,0), (3,5), (7,9), (8,12)\}$ and let $h^1=e^1, h^2=e^2$. Then $\Vert e^1\Vert_P=8$ and $\Vert e^2\Vert_P=12$. For $m\in\Z$  we have
$$\Vert me^1+e^2\Vert _P= \max\{0,3m+5, 7m+9, 8m+12\}- \min\{0,3m+5, 7m+9, 8m+12\}.
$$
Hence $\Vert me^1+e^2\Vert_P\geq 8m+12\geq 12$ for $m\geq 0$ and $\Vert me^1+e^2\Vert_P\geq -7m-9\geq 5$ for $m\leq -2$.
Also,  $\Vert -e^1+e^2\Vert_P=4<\Vert e^1\Vert_P=5$ and hence we should pass to the polygon $AP$, where $A=\left[\begin{matrix}-1& 1\\ 1&0\end{matrix}\right]$. We get 
$AP=\conv\{(0,0), (2,3), (2,7), (4,8)\}$. Now the minimum of $\Vert me^1+e^2\Vert_{AP}$ is achieved at $m=-2$  and $\Vert -2e^1+e^2\Vert_{AP}=4$.
Since $\Vert -2e^1+e^2\Vert_{AP}\geq \Vert e^1\Vert_{AP}$ the algorithm terminates at this step. Hence matrix \linebreak $B=\left[\begin{matrix}-1& 1\\ 3&-2\end{matrix}\right]=\left[\begin{matrix}1& 0\\ -2&1\end{matrix}\right]\left[\begin{matrix}-1& 1\\ 1&0\end{matrix}\right]$
computes the lattice size and basis $((-1,1), (3,-2))$ is reduced with respect to $P$. We get $BP=\conv\{(0,0), (2,-1), (2,3), (4,0)\}$. We conclude that $\ls(P)=\nls(BP)=\min\{8,6,5,6\}=5$.
\end{Ex}

\section{Counterexample in the 3-space.}

For a convex body $P\subset\R^3$ define { $\nls(P)$ to be the smallest $l$ such that $P$ is contained in  $l\Delta$  after a transformation which is  a composition of $A=\left[\begin{matrix}\pm 1&0&0\\0&\pm 1&0\\0&0&\pm 1\end{matrix}\right]$ and a lattice translation. We also define
$$l_1(P):=\max\limits_{(x,y,z)\in P}(x+y+z)-\min\limits_{(x,y,z)\in P}x-\min\limits_{(x,y,z)\in P}y-\min\limits_{(x,y,z)\in P}z.
$$

Let $P\subset\R^3$ be a convex body and let $K=\left(P+(-P)\right)^{\circ}$. 

\begin{Def}
A basis $(h^1,h^2,h^3)$ of $Z^3\subset\R^3$ is reduced with respect to $K$ if 
\begin{itemize}
\item[(1)] $\Vert h^1\Vert\leq \Vert h^2\Vert\leq \Vert h^3\Vert$,
\item[(2)] $\Vert h^1\pm h^2\Vert\geq \Vert h^2\Vert$, and
\item[(3)] $\Vert mh^1+ nh^2+h^3\Vert\geq \Vert h^3\Vert$ for all $m,n\in\Z$.
\end{itemize}
\end{Def}

Note that condition (2) is equivalent to a seemingly stronger requirement that $\Vert mh^1+ h^2\Vert\geq \Vert h^2\Vert$ for all $m\in\Z$.
Assume that $\Vert h^1\pm h^2\Vert\geq \Vert h^2\Vert$. Then for $m\in\Z$ we have
$$\Vert mh^1+h^2\Vert+(|m|-1)\Vert h^2\Vert\geq |m|\Vert h^1+\operatorname{sgn}(m)h^2\Vert\geq |m|\Vert h^2\Vert,
$$
which implies that $\Vert mh^1+ h^2\Vert\geq \Vert h^2\Vert$. Hence the above definition is a natural generalization of Definition~\ref{D:def_red2}

It is shown in \cite{HarSopr} that if a basis $(h^1,h^2,h^3)$ is reduced then matrix $A$ whose rows are $h^1, h^2,$ and $h^3$ computes $\lss(P)$, which led to a fast algorithm for computing $\lss(P)$ in dimension 3.
Hence it is natural to ask  whether for such $A$ we  have $\ls(P)=\nls(AP)$. We can also relax this question and ask whether there exists a reduced basis  $(h^1,h^2,h^3)$ such that for the corresponding matrix $A$ we have
$\ls(P)=\nls(AP)$. The answer to both of these questions is negative, as  the example below shows.

\begin{Ex}\label{E:counter_example} Let $P=\conv \{(1,1,2), (4,4,4), (0,2,2), (3,0,3), (4,3,0)\}\subset \R^3$. 
Then $\w_{e^1}(P)=\w_{e^2}(P)=\w_{e^3}(P)=4$. Let's check that for primitive $(a,b,c)\in\Z^3$ we have $\w_{(a,b,c)}(P)>4$ unless one of $a,b$, and $c$ is $\pm 1$ and the other two numbers are 0.
We have
\begin{align*}
\w_{(a,b,c)}(P)&=\max\{a+b+2c,4a+4b+4c,2b+2c, 3a+3c, 4a+3b\}\\
&-\min\{a+b+2c,4a+4b+4c,2b+2c, 3a+3c, 4a+3b\}.
\end{align*}
We can assume that $c\geq 0$. Let first $c\geq 1$. Then if $b\geq 1$ we have $$\w_{(a,b,c)}(P)\geq (4a+4b+4c)-(4a+3b)=b+4c\geq 5.$$ For $b\leq -1$ and $a\geq 1$, or $b= 0$ and $a\geq 2$ we have
$$\w_{(a,b,c)}(P)\geq (3a+3c)-(a+b+2c)=2a-b+c\geq 5.$$ 
 For $b\leq -1$ and $a\leq 0$, or $b= 0$ and $a\geq -2$ we get
 $$\w_{(a,b,c)}(P)\geq (3a+3c)-(4a+3b)=-a-3b+3c\geq 5.$$ 
If $a=1$ and  $b=0$ then $\w_{(a,b,c)}(P)\geq (4a+4b+4c)-(2b+2c)=4+2c\geq 6$. If $a=-1$ and  $b=0$ then $\w_{(a,b,c)}(P)\geq (a+b+2c)-(4a+3b)=2+2c\geq 5.$

Next, let $c=0$, so we can assume that $a\geq 0$.
If $a\geq 1$ and $b\geq 1$ we have $\w_{(a,b,c)}(P)\geq (4a+4b)-(2b)\geq 6.$ If $a\geq 1$ and $b\leq -1$ we have $\w_{(a,b,c)}(P)\geq 3a-2b\geq 5.$ 
If $a=0$ and $b\geq 1$ then $\w_{(a,b,c)}(P)\geq 4b\geq 8$ unless $b=1$.

We have checked that the only primitive directions in which the lattice width of $P$ is at most 4 are $\pm e^1$, $\pm e^2$, and $\pm e^3$ and the lattice width of $P$ in these directions is 4.
Hence the bases that one can obtain from $(\pm e^1,\pm e^2, \pm e^3)$ by switching the order of the vectors and flipping their signs are reduced. We next show that there are no other reduced bases here. 

Let now $(h^1,h^2,h^3)$ be reduced. It is shown in Theorem~3.3 in~\cite{HarSopr} that   $\Vert ah^1+bh^2+ch^3\Vert\geq\Vert h^3\Vert$ for  all $a,b,c\in\Z$ with nonzero $c$, except, possibly, for one primitive direction $\pm(ah^1+bh^2+ch^3)$ with 
$|a|=|b|=1$ and $|c|=2$. Hence if $\Vert h^3 \Vert \geq 5$ we can conclude that two standard vectors, say, $e^1$ and $e^2$, belong to the $(h^1,h^2)$-plane. 
Since, as was shown in Theorem~2.3 in~\cite{HarSopr}, for $a,b\in\Z$ we have $\Vert a h^1+bh^2\Vert\geq \Vert h^2\Vert$
 for all nonzero $b$, we can conclude that bases $(h^1, h^2)$ and $(e^1,e^2)$ coincide up to changing the order of vectors and flipping their signs. Since $\Vert e^3\Vert=4$, this would imply that $e^3=ae^1+be^2+ch^3$, where $|a|=|b|=1$ and $|c|=2$, but then 
the change of basis matrix from $(h^1,h^2,h^3)$ to the standard basis  would have determinant $2$ or $-2$, which is impossible. We conclude that $\Vert h^3\Vert =4$ and $(h^1,h^2, h^3)$ is obtained from the standard basis by switching the order of the vectors and flipping their signs.

Next, one can easily check that $\nls(P)=8$. Using the obtained description of reduced bases we conclude that for matrix $A$ whose rows form a reduced basis we also have $\nls(AP)=8$.
For  $B=\left[\begin{matrix}-1&0&0\\0&-1&0\\ 0&1&1\end{matrix}\right]$ we get $$BP=\conv\{(-1,-1,3), (-4,-4,8), (0,-2,4), (-3,0,3), (-4,-3,3)\}.$$
Hence $l_1(BP)=2-(-4)-(-4)-3=7$ and we conclude that $\ls(P)\leq 7$, so there is no reduced basis here such that for the corresponding matrix $A$ we have  $\ls(P)=\nls(AP)$. 
\end{Ex}

\subsection*{Acknowledgments} 
Work of Soprunova and Tierney was partially supported by NSF Grant DMS-1156798.


\begin{thebibliography}{6}

\bibitem{Arnold} V. Arnold, \emph{Statistics of integral convex polygons}, Functional Analysis and Its
Applications 14(2), 1-3 (1980)


\bibitem{Barvinok} A. Barvinok, \emph{Integer Points in Polyhedra}, Zurich Lectures in Advanced Mathematics, ISBN: 9783037190524 (2008)


\bibitem{BarPach} I. B\'ar\'any and J. Pach, \emph{On the number of convex lattice polygons},  Combinatorics,
Probability and Computing {\bf 1}, Issue 4 (1992)


\bibitem{BrownKasp} G. Brown, A.  Kasprzyk, \emph{Small polygons and toric codes}, Journal of Symbolic
Computation, {\bf 51}  p. 55 (2013)

\bibitem{CasCools} W. Castryck , F. Cools, \emph{The lattice size of a lattice polygon}, Journal of Combinatorial Theory Series A {\bf 136}, Issue C, 64-95 (2015)


\bibitem{HarSopr} A. Harrison, J. Soprunova, \emph{Lattice Size of 2D and 3D polytopes with respect to the unit cube}, preprint, arXiv:1709.03451 (2020)


\bibitem{KaibSch} M. Kaib, C. Schnorr, \emph{The Generalized Gauss Reduction Algorithm}, Journal of Algorithms
21(3) 565-578 (1996)

\bibitem{LagZieg} J. Lagarias, G. Ziegler, \emph{Bounds for lattice polytopes containing a fixed number of interior points in a sublattice}, Canadian Journal of Mathematics 43(5),
1022-1035 (1991)

\bibitem{LovScarf} L. Lov\'asz, H. Scarf \emph{The Generalized Basis Reduction Algorithm}, Mathematics of Operations Research {\bf 17}, Issue 3, 751-764 (1992)



\bibitem{Schicho} J. Schicho, \emph{Simplification of surface parametrizations -- a lattice polygon approach}, Journal of Symbolic Computation {\bf 36}(3-4), 535-554 (2003).


\end{thebibliography}

\end{document}